\crefname{equation}{}{}
\Crefname{equation}{Equation}{Equations}
\crefname{theorem}{Theorem}{Theorems}
\Crefname{theorem}{Theorem}{Theorems}
\crefname{lemma}{Lemma}{Lemmas}
\Crefname{lemma}{Lemma}{Lemmas}
\crefname{proposition}{Proposition}{Propositions}
\Crefname{proposition}{Proposition}{Propositions}
\crefname{corollary}{Corollary}{Corollaries}
\Crefname{corollary}{Corollary}{Corollaries}
\crefname{conjecture}{Conjecture}{Conjectures}
\Crefname{conjecture}{Conjecture}{Conjectures}
\crefname{section}{Section}{Sections}
\Crefname{section}{Section}{Sections}
\crefname{example}{Example}{Examples}
\Crefname{example}{Example}{Examples}
\newtheorem{theorem}{Theorem}
\newtheorem{lemma}[theorem]{Lemma}
\newcommand{\Z}{\mathbb{Z}}
\title{On Tur\'{a}n problems \\
for Cartesian products of graphs}
\author{Alexander Sidorenko}
\date{\today}
\begin{document}

\maketitle

\begin{abstract}
Let $A,B$ be disjoint sets of sizes $n$ and $m$. 
Let ${\mathcal Q}$ be a family of quadruples, 
having $2$ elements from $A$ and $2$ from $B$, 
such that any subset $S \subseteq A \cup B$ 
with $|S|=7$, $|S \cap A| \geq 2$  and $|S \cap B| \geq 2$ 
contains one of the quadruples. 
We prove that the smallest size of ${\mathcal Q}$ is 
$(1/16 + O(1/n) + O(1/m)) n^2 m^2$ as $n,m\to\infty$. 
We also solve asymptotically a more general 
two-partite Tur\'{a}n problem for quadruples.
\end{abstract}


An $r$-{\it graph} is a pair $H=(V(H),E(H))$ 
where $V(H)$ is a finite set of vertices, 
and the edge set $E(H)$ is a collection of $r$-subsets of $V(H)$. 
A subset of vertices is called {\it independent} 
if it contains no edges of $H$. 
The {\it independence number} $\alpha(H)$ 
is the largest size of an independent subset. 

The classical Tur\'{a}n number $T(n,k,r)$ 
is the minimum number of edges 
in an $n$-vertex $r$-graph $H$ with $\alpha(H) < k$. 
Consequently, $\binom{n}{r} - T(n,k,r)$ 
is the largest number of edges in an $n$-vertex $r$-graph 
that does not contain a complete subgraph on $k$ vertices.
The ratio $T(n,k,r) / \binom{n}{r}$ is non-decreasing 
when $n$ increases, so the limit 
$t(k,r) = \lim_{n\to\infty} T(n,k,r) / \binom{n}{r}$ 
exists. 

The exact values of $T(n,k,2)$ were found 
by Mantel \cite{Mantel:1907} for $k=3$, 
and by Tur\'{a}n \cite{Turan:1941} for all $k$: 
\[
  T(n,k,2) \; = \;
  mn - \frac{m(m+1)}{2} \: (k-1) 
  \;\;\;\;\;{\rm if}\;\; m \leq \frac{n}{k-1} \leq m+1 \; .
\]
In particular, $t(k,2) = 1/(k-1)$. 
Not a single value $t(k,r)$ is known with $k>r>2$. 

Giraud \cite{Giraud:1990} 
discovered an elegant construction for $r=4$, $k=5$ which yields 
$t(5,4) \leq 5/16$. 
This construction was generalized 
by de Caen, Kreher and Wiseman \cite{Caen:1988} 
in the following way. 
Consider two disjoint sets 
$A=\{a_1,a_2,\ldots,a_n\}$, $B=\{b_1,b_2,\ldots,b_m\}$, 
and a $0,1$-matrix $X=[x_{ij}]$ of size $n \times m$. 
Let $E_{40}$ be the set of all quadruples within $A$, 
$E_{04}$ be the set of all quadruples within $B$, 
and $E_{22}$ be the set of quadruples $\{a_i,a_j,b_p,b_q\}$ 
such that $x_{ip} + x_{iq} + x_{jp} + x_{jq}$ is even.
It is easy to see that in the $4$-graph 
$H=(A \cup B,\: E_{40} \cup E_{04} \cup E_{22})$ 
any subset of $5$ vertices contains at least one edge. 
If $n$ and $m$ are approximately equal,
and the entries of $X$ are selected randomly and independently, 
with equal probability of being $0$ or $1$, 
the expected number of edges in $H$ is 
$\frac{5}{16} \binom{n+m}{4} + O((n+m)^3)$. 
A more specific choice of $X$ related to Hadamard matrices 
provides the best known upper bounds for $T(n,5,4)$ 
(see \cite{Sidorenko:1995}). 

If $n,m\to\infty$, then
$\big( \frac{1}{2} + o(1) \big) \binom{n}{2}\binom{m}{2}$ 
is the minimal size of a system of quadruples 
with $2$ elements from $A$ and from $B$ 
such that every quintuple 
with $3$ elements from $A$ and $2$ from $B$, or vice versa, 
contains at least one of the quadruples.

In \cite{Caen:1990}, de Caen, Kreher and Wiseman 
defined a broader set of Tur\'{a}n type problems 
which we describe here for quadruples setting only. 
Let $Q(n,m,a,b)$ denote the smallest size of a system of quadruples 
with $2$ elements from $A$ and from $B$ 
such that every $(a+b)$-set 
with $a$ elements from $A$ and $b$ from $B$ 
contains at least one of the quadruples. 
Obviously, $Q(n,m,a,b) \leq T(n,a,2) \cdot T(m,b,2)$. 
It was proved in \cite{Caen:1990} that 
$Q(n,n,3,3) = q \binom{n}{2}^2 + o(n^4)$ 
where $1/4 \geq q \geq (3-\sqrt{5})/4 \approx 0.1910$. 

In this note, we consider the following problem. 
Let ${\mathcal P}$ be a set of pairs $(a,b)$ with $a,b\geq 2$. 
Let $A$ and $B$ be disjoint sets with $|A|=n$ and $|B|=m$. 
Denote by $Q(n,m,{\mathcal P})$ the minimum size of a system of quadruples 
with $2$ elements from $A$ and from $B$ 
such that for every $(a,b)\in{\mathcal P}$, 
every $(a+b)$-set with $a$ elements from $A$ and $b$ from $B$ 
contains at least one of the quadruples. 
The cases 
${\mathcal P} = \{(2,3),(3,2)\}$ and ${\mathcal P} = \{(3,3)\}$ 
were studied in \cite{Caen:1988} and \cite{Caen:1990}, respectively.
We will focus on the cases 
${\mathcal P}_{k2} = \{(2,k+1),(k+1,2)\}$ and
${\mathcal P}_{k3} = \{(2,k+1),(3,k),(k,3),(k+1,2)\}$. 
To solve them, we need to generalize the above-mentioned construction 
with $2 \times 2$ submatrices.

\vspace{3mm}
\noindent
{\bf Definition.} 
A $2 \times 2$ matrix $[x_{ij}]$ over an additive abelian group 
is called {\it fair} if $x_{11} + x_{22} = x_{12} + x_{21}$. 

\begin{lemma}\label{th:fair2}
Every $2 \times (k+1)$ matrix $[x_{ij}]$ over $\Z_k$
contains a fair $2 \times 2$ submatrix. 
\end{lemma}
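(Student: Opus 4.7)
The plan is to reduce the statement to a direct application of the pigeonhole principle, after making the right change of variable. The fairness condition $x_{11}+x_{22}=x_{12}+x_{21}$ can be rewritten as $x_{11}-x_{12}=x_{21}-x_{22}$, or equivalently $x_{1p}-x_{2p}=x_{1q}-x_{2q}$ when applied to the submatrix on columns $p$ and $q$. In other words, fairness of a $2\times 2$ submatrix on columns $p,q$ is exactly the statement that the ``row-difference'' $y_j := x_{1j}-x_{2j}$ takes the same value at $j=p$ and $j=q$.

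Given this reformulation, I would define $y_j = x_{1j}-x_{2j}\in\Z_k$ for $j=1,\ldots,k+1$. Since the $y_j$ are $k+1$ elements of a $k$-element set, the pigeonhole principle forces $y_p=y_q$ for some $p\ne q$. The $2\times 2$ submatrix formed by columns $p$ and $q$ then satisfies $x_{1p}-x_{2p}=x_{1q}-x_{2q}$, and rearranging gives the fairness identity. This completes the argument.

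There is essentially no obstacle here: the key conceptual move is just recognizing that ``fair'' is a condition about the row-difference being constant on the two chosen columns, at which point the $k+1$ versus $k$ count makes pigeonhole immediate. The bound $k+1$ on the number of columns is visibly tight, since any injection from a $k$-element set to $\Z_k$ shows that a $2\times k$ matrix with all row-differences distinct has no fair $2\times 2$ submatrix.
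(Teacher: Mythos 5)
Your argument is correct and is essentially identical to the paper's proof: both apply the pigeonhole principle to the $k+1$ row-differences $x_{1j}-x_{2j}\in\Z_k$ and observe that two equal differences yield a fair submatrix. Your added remark on the tightness of $k+1$ is a nice bonus but not needed.
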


\begin{proof}[{\bf Proof}]
Among the $k+1$ values $x_{1i} - x_{2i}$ one can find two equal ones. 
If $x_{1i} - x_{2i} = x_{1j} - x_{2j}$, 
then columns $i$ and $j$ form a fair submatrix.
\end{proof}

\begin{lemma}\label{th:fair3}
If $k$ is even, every $3 \times k$ matrix $X=[x_{ij}]$ over $\Z_k$ 
contains a fair $2 \times 2$ submatrix.
\end{lemma}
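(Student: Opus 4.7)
My plan is to convert the no-fair-submatrix hypothesis into a statement about permutations of $\Z_k$ and then derive a numerical contradiction from the parity of $k$.

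First I would reformulate fairness: a $2 \times 2$ submatrix on rows $i,j$ and columns $p,q$ is fair precisely when $x_{ip} - x_{jp} = x_{iq} - x_{jq}$. For each pair of rows $i < j$ in the $3 \times k$ matrix, define the length-$k$ row of differences $d_{ij}(c) = x_{ic} - x_{jc} \in \Z_k$. The absence of a fair $2 \times 2$ submatrix with these two rows is equivalent to $d_{ij}$ having all distinct entries; since the row has $k$ entries in a set of size $k$, $d_{ij}$ must then be a permutation of $\Z_k$. So if no fair submatrix exists at all, all three rows $d_{12}$, $d_{13}$, $d_{23}$ are permutations of $\Z_k$.

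The second ingredient is the pointwise identity $d_{12}(c) + d_{23}(c) - d_{13}(c) = 0$ for every column $c$, which is immediate from the definitions. Summing this identity over $c = 1,\ldots,k$ gives
\[
  \sum_{c} d_{12}(c) \;+\; \sum_{c} d_{23}(c) \;-\; \sum_{c} d_{13}(c) \;\equiv\; 0 \pmod{k}.
\]
Under the supposed permutation property, each of the three sums equals $0+1+\cdots+(k-1) = k(k-1)/2 \pmod k$. For even $k$, this common value is $k/2$, so the left-hand side becomes $k/2 \not\equiv 0 \pmod k$, which is the desired contradiction.

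The step I expect to be the crux is recognizing the permutation reformulation and then noticing that the linear relation $d_{13} = d_{12} + d_{23}$ forces a mod-$k$ identity among the sums; once that is in place, the parity input $k$ even does all the work (and also explains why the statement fails for odd $k$, since then $k(k-1)/2 \equiv 0 \pmod k$ and no contradiction arises). No delicate case analysis or choice of columns should be needed beyond this.
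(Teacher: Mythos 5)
Your proof is correct and is essentially the paper's argument in a slightly more symmetric form: the paper first normalizes the first row to zero and then compares the sums $S_2$, $S_3$ of the (necessarily bijective) remaining rows against the sum of their difference row, while you work directly with the three difference rows $d_{12}, d_{13}, d_{23}$ and the relation $d_{12}+d_{23}=d_{13}$. In both cases the crux is identical: each difference row must be a permutation of $\Z_k$, and the sum $0+1+\cdots+(k-1)\equiv k/2 \pmod{k}$ for even $k$ turns the linear relation into the contradiction $k/2\equiv 0$.
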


\begin{proof}[{\bf Proof}]
It is obvious that a fair submatrix remain fair 
after adding the same row to every row of $X$. 
Since one may subtract the first row from each of the three rows, 
we can assume that $x_{11}=x_{12}=\ldots=x_{1k}=0$. 
If there are two equal entries in the second or in the third row, 
then $X$ has a fair submatrix. 
Hence, we can assume that both the second and the third row contain 
every element of $\Z_k$ exactly once. 
Let $S_n$ be the sum of entries in row $n$, 
then $S_2 = S_3$. 
If there are two columns $i$ and $j$ such that 
$x_{2i} - x_{3i} = x_{2j} - x_{3j}$, 
then $X$ has a fair submatrix in columns $i,j$ and rows $2,3$. 
Hence, we can assume that $k$ values $x_{1i} - x_{2i}$
represent the $k$ distinct elements of $\Z_k$, but then
$S_2 - S_3 = \sum_{i=1}^k (x_{2i} - x_{3i}) = 0+1+\ldots+(k-1) \equiv k/2 \pmod{k}$, 
a contradiction.
\end{proof}

Let $G_k$ be a graph whose vertices are functions $f: \Z_k\to\Z_k\:$. 
A pair of vertices $\{f,g\}$ forms an edge in $G_k$ 
if $f-g$ is a bijection. 
\Cref{th:fair3} restates the fact 
that $G_k$ has no triangles when $k$ is even. 
For odd $k$, the problem of counting triangles in $G_k$ 
has been solved asymptotically in \cite{Eberhard:2015}. 
Let $p(k)$ be the smallest prime factor of $k$. 
The $p(k)$ functions $f_0,f_1,\ldots,f_{p(k)-1}$, 
where $f_i(j) = i \cdot j \pmod{k}$, 
form a complete subgraph in $G_k$. 
It is very tempting to conjecture that $p(k)$ 
is indeed the size of the largest clique in $G_k$. 
We know that this is true for even $k$ and for prime $k$.
Computer search confirms that this is also true for $k=9$.

\begin{theorem}\label{th:main}
$
 Q(n,m,{\mathcal P}_{k2}) = 
 \big( \frac{1}{4k} + O(\frac{1}{n}) + O(\frac{1}{m}) \big) n^2 m^2
$,
and if $k$ is even, 
$
 Q(n,m,{\mathcal P}_{k3}) = 
 \big( \frac{1}{4k} + O(\frac{1}{n}) + O(\frac{1}{m}) \big) n^2 m^2
$
as $n,m\to\infty$.
\end{theorem}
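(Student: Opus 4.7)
The plan is to establish matching upper and lower bounds, each of the form $\frac{n^2 m^2}{4k}\bigl(1+O(1/n)+O(1/m)\bigr)$. A useful preliminary observation is that ${\mathcal P}_{k2}\subset{\mathcal P}_{k3}$, so any family ${\mathcal Q}$ feasible for ${\mathcal P}_{k3}$ is automatically feasible for ${\mathcal P}_{k2}$; hence $Q(n,m,{\mathcal P}_{k3})\geq Q(n,m,{\mathcal P}_{k2})$, and a single lower-bound argument covers both statements. Only the upper-bound construction must be verified against the additional constraints $(3,k)$ and $(k,3)$ present in ${\mathcal P}_{k3}$.

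\textbf{Upper bound.} Following the template of the de Caen--Kreher--Wiseman construction recalled in the introduction, I would pick a matrix $X=[x_{ij}]\in\Z_k^{n\times m}$ and let ${\mathcal Q}(X)$ be the collection of quadruples $\{a_i,a_j,b_p,b_q\}$ whose corresponding $2\times 2$ submatrix of $X$ is fair. Applying \Cref{th:fair2} to any two rows and $k+1$ columns of $X$ (and to the transposed configuration) shows that ${\mathcal Q}(X)$ covers every $(a+b)$-set with $(a,b)\in{\mathcal P}_{k2}$. For even $k$, applying \Cref{th:fair3} to any three rows and $k$ columns (and its transpose) covers the extra pairs $(3,k)$ and $(k,3)$, so the same ${\mathcal Q}(X)$ is feasible for ${\mathcal P}_{k3}$. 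To bound the size, draw the entries of $X$ i.i.d.\ uniformly in $\Z_k$: each fixed $2\times 2$ submatrix is fair with probability exactly $1/k$, so $\mathbb{E}|{\mathcal Q}(X)|=\binom{n}{2}\binom{m}{2}/k \leq \frac{n^2m^2}{4k}$, and some realization attains at most this.

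\textbf{Lower bound.} Let ${\mathcal Q}$ be any feasible family for ${\mathcal P}_{k2}$. For each pair $\{i,j\}\subset\{1,\ldots,n\}$, define a graph $G_{ij}$ on vertex set $B$ whose edges are the pairs $\{b_p,b_q\}$ with $\{a_i,a_j,b_p,b_q\}\in{\mathcal Q}$. The $(2,k{+}1)$ covering condition forces every $(k{+}1)$-subset of $B$ to contain an edge of $G_{ij}$, i.e.\ $\alpha(G_{ij})\leq k$. By Tur\'{a}n's theorem, $|E(G_{ij})|\geq T(m,k+1,2)=\frac{m^2}{2k}-O(m)$. Since each quadruple in ${\mathcal Q}$ is counted by exactly one pair $\{i,j\}$, summing gives
\[
 |{\mathcal Q}| \;=\; \sum_{i<j}|E(G_{ij})| \;\geq\; \binom{n}{2}\Bigl(\frac{m^2}{2k}-O(m)\Bigr) \;=\; \frac{n^2m^2}{4k}-O(n^2m+nm^2),
\]
matching the upper bound.

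\textbf{Main obstacle.} Given \Cref{th:fair2,th:fair3}, the argument is essentially mechanical: the upper bound is the direct extension of the introductory construction, and the lower bound is a standard per-row-pair Tur\'{a}n reduction. The only nontrivial conceptual point is that the $(3,k)$ and $(k,3)$ obstructions can be absorbed by the \emph{same} quadruple family used for $(2,k{+}1)$ and $(k{+}1,2)$; this rests squarely on the parity hypothesis in \Cref{th:fair3}, and this is precisely why the ${\mathcal P}_{k3}$ half of the theorem is stated only for even $k$, since for odd $k$ the $3\times k$ analogue fails (cf.\ the discussion of cliques in $G_k$ preceding the theorem).
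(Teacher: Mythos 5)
Your proposal is correct and follows essentially the same route as the paper: the lower bound via counting, for each pair in $A$, the quadruples containing it and invoking Tur\'{a}n's theorem on $B$ (together with the containment $Q(n,m,{\mathcal P}_{k3})\geq Q(n,m,{\mathcal P}_{k2})$), and the upper bound via a uniformly random matrix over $\Z_k$ with fair $2\times 2$ submatrices, using \Cref{th:fair2} and, for even $k$, \Cref{th:fair3}. No substantive differences from the paper's argument.
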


\begin{proof}[{\bf Proof}]
Let $A$ and $B$ be disjoint sets of sizes $n$ and $m$. 
Let ${\mathcal Q}$ be a system of quadruples such that 
every $(k+3)$-set with $2$ elements from $A$ and $k+1$ from $B$ 
contains at least one of the quadruples. 
Obviously, for each pair $\{u,v\}\subseteq A$, 
the number of quadruples in ${\mathcal Q}$ 
that contain $\{u,v\}$ is at least $T(m,k+1,2)$, 
hence $Q(n,m,{\mathcal P}_{k2}) \geq \binom{n}{2} T(m,k+1,2)$, 
and similarly, $Q(n,m,{\mathcal P}_{k2}) \geq \binom{m}{2} T(n,k+1,2)$, 
which yields
$
  Q(n,m,{\mathcal P}_{k3}) \geq Q(n,m,{\mathcal P}_{k2}) \geq 
  \big( 1/(4k) + O(1/n) + O(1/m) \big) n^2 m^2
$
as $n,m\to\infty$.

To prove the upper bound, 
consider an $n \times m$ matrix $X=[x_{ij}]$ over $\Z_k$. 
Let $A=\{a_1,a_2,\ldots,a_n\}$, $B=\{b_1,b_2,\ldots,b_m\}$, 
and let ${\mathcal Q}$ consist of quadruples $\{a_i,a_j,b_p,b_q\}$ 
such that rows $i,j$ and columns $p,q$ in $X$ 
produce a fair $2 \times 2$ submatrix. 
By \cref{th:fair2}, 
$Q(n,m,{\mathcal P}_{k2}) \leq |{\mathcal Q}|$, 
and if $k$ is even, by \cref{th:fair3}, 
$Q(n,m,{\mathcal P}_{k3}) \leq |{\mathcal Q}|$. 
If entries of $X$ are selected randomly, 
independently and uniformly over $\Z_k$,  
the expected value of $|{\mathcal Q}|$ is 
$\frac{1}{k}\binom{n}{2}\binom{m}{2}$ 
which provides the required upper bound.
\end{proof}

The result mentioned in the abstract follows from \cref{th:main} 
when $k=4$ and ${\mathcal P} = {\mathcal P}_{43}$.

Tur\'{a}n problems, 
where the extremal configurations depend on random maps
defined on the set of pairs of vertices, 
have been studied in a recent series of articles 
by R\"{o}dl and his coauthors 
(see the concluding remarks in \cite{Reiher:2018}).
Another ``partite'' version of Tur\'{a}n problem 
and its connection to the classical problem 
has been studied by Talbot \cite{Talbot:2007}.

\section*{Acknowledgments}

The author would like to thank two anonymous referees 
for their careful reading and valuable suggestions.

\end{document}